\newtheorem{theorem}{Theorem}
\newtheorem{definition}[theorem]{Definition}
\newtheorem{lemma}[theorem]{Lemma}
\DeclareMathOperator{\crn}{cr}
\newcommand{\HHopt}{Harary--Hill optimal}
\newcommand{\Triangle}{3-cycle}
\title{Bishellable drawings of $K_n$}
\author{Bernardo M.~\'Abrego\thanks{Department of Mathematics,
California State University, Northridge, {\tt bernardo.abrego@csun.edu}}
\and
Oswin Aichholzer\thanks{Institute of Software Technology,
Graz University of Technology, {\tt oaich@ist.tugraz.at}}
\and
Silvia Fern\'andez-Merchant\thanks{Department of Mathematics,
California State University, Northridge, {\tt silvia.fernandez@csun.edu}}
\and
Dan McQuillan\thanks{Department of Mathematics, Norwich University, {\tt dmcquill@norwich.edu}}
\and
Bojan Mohar\thanks{Department of Mathematics, Simon Fraser University,
  {\tt mohar@sfu.ca}
  \newline\indent\indent On leave from: IMFM, Ljubljana, Slovenia.}
\and
Petra Mutzel\thanks{Department of Computer Science, Technische
  Universit\"at Dortmund, {\tt petra.mutzel@tu-dortmund.de}}
\and
Pedro Ramos\thanks{Departamento de F\'isica y Matem\'aticas,
Universidad de Alcal\'a, {\tt pedro.ramos@uah.es}}
\and
R. Bruce Richter\thanks{Department of Combinatorics and Optimization,
  University of Waterloo, {\tt brichter@uwaterloo.ca}}
\and
Birgit Vogtenhuber\thanks{Institute of Software Technology,
Graz University of Technology, {\tt bvogt@ist.tugraz.at}}}
\begin{document}

\maketitle

\begin{abstract}
\noindent
The Harary--Hill conjecture, still open after more than 50 years, asserts that the crossing number of the complete graph $K_n$ is
\vspace{-1ex}
\[
H(n) = \frac 1 4 \left\lfloor\frac{\mathstrut n}{\mathstrut 2}\right\rfloor
\left\lfloor\frac{\mathstrut n-1}{\mathstrut 2}\right\rfloor
\left\lfloor\frac{\mathstrut n-2}{\mathstrut 2}\right\rfloor
\left\lfloor\frac{\mathstrut n-3}{\mathstrut 2}\right\rfloor\,.
\]
\'Abrego et al. [B.~M. {\'{A}}brego, O. Aichholzer, S. Fern{\'{a}}ndez{-}Merchant,
  P. Ramos, and G. Salazar. Shellable drawings and the cylindrical crossing number of
  $K_n$. {\em Disc. {\&} Comput. Geom.}, 52(4):743--753, 2014.]
introduced the notion of shellability of a drawing $D$ of $K_n$. They proved that if $D$ is $s$-shellable for some $s\geq\lfloor\frac{n}{2}\rfloor$, then $D$ has at least $H(n)$ crossings.  This is the first combinatorial condition on a drawing that guarantees at least $H(n)$ crossings.

In this work, we generalize the concept of $s$-shellability to bishellability,
where the former implies the latter in the sense that every $s$-shellable drawing is,  for any $b \leq s-2$, also \mbox{$b$-bishellable}.
Our main result is that $(\lfloor \frac{n}{2} \rfloor\!-\!2)$-bishellability of a drawing $D$ of $K_n$ also guarantees, with a simpler proof than for \mbox{$s$-shellability},
that $D$ has at least $H(n)$ crossings.
We exhibit a drawing of $K_{11}$ that has $H(11)$ crossings, is 3-bishellable, and is not $s$-shellable for any $s\geq5$.
This shows that we have properly extended the class of drawings for which the Harary--Hill Conjecture is proved.
Moreover, we provide an infinite family of drawings of $K_n$ that are $(\lfloor \frac{n}{2} \rfloor\!-\!2)$-bishellable, but not $s$-shellable for any $s\geq\lfloor\frac{n}{2}\rfloor$.  
\end{abstract}

\section{Introduction}\label{sintro}

We consider drawings of the complete graph $K_n$ in the plane in which vertices are drawn as points in the plane and edges as simple planar curves that contain no vertices other than their endpoints.
As usual, we require that all intersections are proper crossings (no tangencies)
and that two edges share only a finite
number of points.
The number  $\crn(D)$ of crossings in a drawing $D$ is the sum of the number of intersection points of all unordered pairs of interiors of edges.
The \emph{crossing number} $\crn(G)$ is the minimum $\crn(D)$ over all drawings $D$ of $G$. A drawing is \emph{crossing optimal (or minimal)} if $\crn(D)=\crn(G)$.

A long-standing conjecture is that the crossing number $\crn(K_n)$ of the complete graph $K_n$ is equal to
$$H(n):=\frac 1 4 \left\lfloor\frac{\mathstrut n}{\mathstrut 2}\right\rfloor
\left\lfloor\frac{\mathstrut n-1}{\mathstrut 2}\right\rfloor
\left\lfloor\frac{\mathstrut n-2}{\mathstrut 2}\right\rfloor
\left\lfloor\frac{\mathstrut n-3}{\mathstrut 2}\right\rfloor\,.$$
A very fine history of this and related problems is given by Beineke and Wilson~\cite{bw}.
They attribute the conjecture to Anthony Hill. As it is first published by Harary and Hill in~\cite{hh}, we propose the notation $H(n)$ used above to denote the conjectured value of $\crn(K_n)$ and attribute the conjecture to Harary--Hill.

\noindent
According to~\cite{bw}, Hill proposed a construction with vertices on two concentric cycles and conjectured $H(n)$ to be the number of crossings for those drawings. Bla\v{z}ek and Koman~\cite{bk} proved the following variation of Hill's construction. Half of the vertices are drawn evenly spaced (as a cycle) at the rim of the top lid of a cylinder (tin can) and the remaining vertices are drawn evenly spaced at the rim of the bottom lid.
The edges are drawn either as straight-lines within the cylinder lids or (in the case they connect two vertices from different lids) as shortest geodesic lines on the cylinder. This model is identical to the model with two concentric circles and it gave rise to a name for a whole class of drawings, see below.

So far, the conjecture has been verified for $n\le 12$ only. Guy established the result for $n\leq 10$~\cite{guy1972crossing} and Pan and Richter~\cite{PanR07} have shown that $\crn(K_{11})$ is 100 using a computer proof; well-known counting arguments show $\crn(K_{12})=150$.
McQuillan and Richter~\cite{McQR14} present a proof that $\crn(K_9)=36$ not based on exhaustive case analysis.

Recently, an important line of research has been started by \'Abrego et al.~\cite{twopage}, who restricted the allowed drawing styles and proved that for these drawings the conjecture is true. In \cite{twopage}, they consider 2-page book drawings of $K_n$. We recall that in a 2-page book drawing of a graph all vertices are on a line $\ell$ and each edge needs to be drawn on one of the two half-planes defined by $\ell$. In~\cite{monotone,balko}, the technique was extended to drawings of $K_n$ where all the vertices have different $x$-coordinates and the edges are $x$-monotone curves, known as \emph{monotone\/} drawings.
In~\cite{shell1}, \'Abrego et al.\ generalize their result to cylindrical drawings and $x$-bounded drawings.
A \emph{cylindrical drawing\/} has two concentric circles on which all the vertices must be placed, and no edge intersects these circles. An \emph{$x$-bounded drawing} requires that all edges are contained within a strip bounded by the vertical lines defined by their endpoints.
Every monotone drawing is $x$-bounded and every 2-page book drawing is a cylindrical drawing, so their result indeed generalizes previous results.
Their result, which also prompted this work, was based on the first general combinatorial condition on a drawing $D$ of $K_n$ that guarantees that $D$ has at least $H(n)$ crossings.
For this, they introduced the notion of \emph{shellability} of a drawing of $K_n$.

In a related work, Balko et al.~\cite{balko} give a combinatorial characterization of several classes of $x$-monotone drawings of complete graphs and show that also the odd crossing number (a different variant of counting crossings) of $x$-monotone drawings as well as shellable drawings of $K_n$ is at least $H(n)$.

The purpose of this work is to define a more general version of shellability that we call bishellability, and which is implied by shellability.
The main benefit of our approach is the simplification of the principal concept.
This allows a somewhat simpler and more intuitive proof for the fact that bishellable drawings satisfy the Harary--Hill Conjecture.
Moreover, bishellability reflects better the required properties. 
We are convinced that this is a further step to gain more insight into the structure of crossing minimal drawings, with the ultimate goal to prove the Harary--Hill Conjecture.

For the following definition, we recall that, for a drawing $D$ of $K_n$, a {\em face\/} of~$D$ is a component of $\mathbb R^2\setminus D$.  This is the same notion as for embeddings:  if we convert each crossing point of $D$ into a vertex, then the faces of $D$ are the faces of the planarly embedded graph. Finally, if $V$ is a subset of vertices in the drawing, $D-V$ denotes the drawing obtained when vertices of $V$ and all edges incident to them are deleted from $D$.

\begin{definition}[\cite{shell1}]
	For a positive integer $s$, a planar drawing $D$ of $K_n$ is \mbox{\emph{$s$-shellable}} if there is a sequence $v_1,v_2,\dots,v_s$ of distinct vertices of $D$ so that, relative to a reference face $F$, for all integers $r,t$ with $1\le r<t\le s$, the vertices~$v_r$ and $v_t$ are both incident with the face of $D-\{v_1,\dots,v_{r-1},v_{t+1},\dots,v_s\}$ containing $F$.
\end{definition}

The main theorem in \cite{shell1} is the following.

\begin{theorem}[\cite{shell1}]\label{th:old}
Let $D$ be a drawing of $K_n$.  If there is an integer $s\ge \lfloor \frac{n}{2}\rfloor$ such that $D$ is $s$-shellable, then $cr(D)\ge H(n)$.
\end{theorem}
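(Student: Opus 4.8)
My plan is to prove the theorem by induction on $n$, peeling one vertex at a time off the front of the shelling sequence. The base cases $n\le 4$ are trivial, since $H(n)=0$. The first routine ingredient I would record is that $s$-shellability is essentially preserved under deleting a single vertex: if $D$ is $s$-shellable with witness $v_1,\dots,v_s$ and reference face $F$, then for every vertex $v$ the drawing $D-v$ is $(s-1)$-shellable (in fact $s$-shellable when $v\notin\{v_1,\dots,v_s\}$), with reference face the face of $D-v$ that contains $F$. The reason is that deleting a vertex only merges faces, so two vertices incident with a common face of some $D-W$ remain incident with a common face of $(D-v)-W$; applying this to the sequence obtained from $v_1,\dots,v_s$ by omitting the term equal to $v$ (if there is one) yields the claim. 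In particular $D-v_1$ is $(s-1)$-shellable, witnessed by $v_2,\dots,v_s$.

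The heart of the matter is a lemma bounding from below the number $\crn_D(v_1)$ of crossings of $D$ that lie on an edge incident with $v_1$. Concretely, I would try to prove that for an $s$-shellable drawing of $K_n$ with $s\ge\lfloor n/2\rfloor$,
\[
\crn_D(v_1)\;\ge\;\tfrac12\Big\lfloor\tfrac{n-1}{2}\Big\rfloor\Big\lfloor\tfrac{n-2}{2}\Big\rfloor\Big\lfloor\tfrac{n-3}{2}\Big\rfloor\;=\;H(n)-H(n-1),
\]
using not merely that $v_1$ lies on $F$ but the entire nested way in which $v_1,v_2,\dots,v_s$ sit on $F$. Granting this, the case of \emph{even} $n$ is immediate: $D-v_1$ is $(s-1)$-shellable with $s-1\ge\lfloor n/2\rfloor-1=\lfloor(n-1)/2\rfloor$, so the inductive hypothesis gives $\crn(D-v_1)\ge H(n-1)$; and since every crossing of $D$ either lies on an edge at $v_1$ or survives in $D-v_1$, we get $\crn(D)=\crn(D-v_1)+\crn_D(v_1)\ge H(n-1)+\big(H(n)-H(n-1)\big)=H(n)$.

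The case of \emph{odd} $n$ is where the real difficulty lies, and I expect it to be the main obstacle. Peeling $v_1$ now leaves an even graph $D-v_1$ that is only $(\lfloor n/2\rfloor-1)$-shellable, one vertex short of the hypothesis used above; moreover this loss is genuine rather than an artefact, since in an unbalanced cylindrical drawing of $K_n$ the quantity $\crn_D(v_1)$ can in fact dip below $H(n)-H(n-1)$, the excess reappearing as extra crossings inside $D-v_1$. So for odd $n$ one must either strengthen the key lemma so that it recaptures exactly this deficiency from the arrangement of $v_2,\dots,v_s$ as seen from $v_1$, or replace the one-sided peel by a two-sided peel (the shelling condition permits deleting a suffix of the sequence as well as a prefix) and bound the two steps jointly. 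The key lemma is itself a substantial piece of work: it cannot hold without the shellability hypothesis — otherwise, applied to the outer face of an arbitrary drawing and iterated, it would prove the Harary-Hill Conjecture outright — so its proof must genuinely exploit the nesting, presumably by analysing the rotation at $v_1$, using the many vertices simultaneously visible from $F$ to control how the edges at $v_1$ cut the rest of the drawing, and then running an inner induction or a convex-position-style count on the resulting pieces. Finally, should an averaging argument over all vertex deletions be needed (most plausibly in the odd case), I would first normalise to good drawings, where $\sum_v\crn(D-v)=(n-4)\crn(D)$ holds.
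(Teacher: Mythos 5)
Your proposal is not a proof: its two essential components are both left open. The ``key lemma'' $\crn_D(v_1)\ge H(n)-H(n-1)$ is only asserted, with a programme (``analyse the rotation at $v_1$\dots run an inner induction'') rather than an argument, and this lemma carries the entire weight of the theorem in the even case. Worse, as you yourself observe, for odd $n$ the one-vertex peel genuinely fails -- $D-v_1$ is one short of the shellability threshold and $\crn_D(v_1)$ can drop below $H(n)-H(n-1)$ -- and you offer only candidate strategies for repairing this, not a repair. So the proposal correctly identifies where the difficulty lives but does not resolve it at either of the two critical points. (The preliminary observations you do prove -- that deleting a vertex preserves shellability up to shortening the sequence, and that $\sum_v\crn(D-v)=(n-4)\crn(D)$ -- are correct but routine.)

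The route taken here (and in the shellability paper this theorem is quoted from) is structurally different and avoids your obstruction entirely. One never bounds the crossings attached to a single vertex. Instead one uses the identity expressing $\crn(D)$ as a weighted sum of the cumulative $k$-edge counts $E_{\le\le k}(D)$, $k=0,\dots,\lfloor n/2\rfloor-2$ (Equation~(\ref{eq:twopage})), and proves the lower bound $E_{\le\le k}(D)\ge 3\binom{k+3}{3}$ for each $k$ by induction on $k$ with $n$ fixed (Lemma~\ref{lm:inequality}); in the present paper this is done under the weaker hypothesis of bishellability, which $s$-shellability implies, and the shellability theorem follows since $s$-shellable gives $(s-2)$-bishellable and hence $(\lfloor n/2\rfloor-2)$-bishellable by monotonicity. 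The inductive step deletes only the first vertex $a_0$ of one sequence and tracks how each $i$-edge of $D-a_0$ shifts to an $i$- or $(i+1)$-edge of $D$; the needed gain of $3\binom{k+2}{2}$ is supplied by the $\le i$-edges in the rotation at $a_0$ around the reference face together with ``invariant'' edges located at the vertices $b_0,\dots,b_k$ of the opposite sequence. Because the induction is on $k$ rather than on $n$, there is no parity case split and no per-vertex crossing lemma to prove; the global conversion from $k$-edge bounds to a crossing bound is a one-line substitution into the identity. If you want to salvage your approach, you would essentially have to prove your key lemma by exactly this kind of $k$-edge analysis, at which point the detour through $\crn_D(v_1)$ buys nothing.
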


One of the disadvantages of the notion of shellability is that $s$-shellable does not imply $(s\!-\!1)$-shellable.  This is because the sequence $v_1,v_2,\dots,v_s$ of vertices needs to be circular to get from the reference face $F$ and back to $F$ again, and a long circular sequence does not imply a shorter circular sequence.

We introduce a more general variant of shellability that we call \emph{bishellability}.
\begin{definition}
	For a non-negative integer $s$, a drawing $D$ of $K_n$ is \emph{$s$-bishellable} if there exist sequences $a_0,a_1,\dots,a_s$ and $b_s,b_{s-1},\dots,b_1,b_0$, each sequence consisting of distinct vertices of $K_n$, so that, with respect to a reference face $F$:
\begin{enumerate}[label=(\arabic*)]
	\item for each $i=0,1,2,\dots,s$, the vertex $a_i$ is incident with the face of $D-\{a_0,a_1,\dots,a_{i-1}\}$ that contains~$F$;
	\item for each $i=0,1,2,\dots,s$, the vertex $b_i$ is incident with the face of $D-\{b_0,b_1,\dots,b_{i-1}\}$ that contains~$F$; and
	\item for each $i=0,1,\dots,s$, the set $\{a_0,a_1,\dots a_i\}\cap \{b_{s-i},b_{s-i-1},\dots,b_0\} = \emptyset$.
\end{enumerate}
\end{definition}

We remark that if $a_0,a_1,\dots,a_s$ and $b_s,b_{s-1},\dots,b_0$ show that $D$ is $s$-bishellable, then the same sequences without $a_s$ and $b_s$ show that $D$ is $(s\!-\!1)$-bishellable. Moreover, the vertices $a_0$ and $b_0$ must lie on the boundary of the common face $F$

Also, if $D$ is $s$-shellable, with witnessing sequence $v_1,v_2,\dots,v_s$, then $D$ is $(s\!-\!2)$-bishellable with witnessing sequences $a_0,a_1,\dots,a_{s-2}$ and $b_{s-2},b_{s-3},\dots,b_0$ defined by $a_i=v_{i+1}$ and $b_i=v_{s-i}$.

Here is the version of Theorem~\ref{th:old} that holds for bishellable drawings; its proof is in the next section.

\begin{theorem}\label{th:main}
If $D$ is an $(\lfloor \frac{n}{2}\rfloor-2)$-bishellable drawing of $K_n$, then $\crn(D)\ge H(n)$.
\end{theorem}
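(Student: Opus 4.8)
The plan is to mirror the $\le k$-edge counting machinery developed in \cite{twopage} and \cite{shell1}, with the two linear sequences of a bishelling playing the role that the single circular sequence plays for shellability; this is precisely where the promised simplification lives, since a linear sequence degrades gracefully under vertex deletion whereas a circular one does not. Write $s=\lfloor\tfrac n2\rfloor-2$ and fix witnessing sequences $a_0,\dots,a_s$ and $b_s,\dots,b_0$ for a reference face $F$. For $0\le i\le s$ put $D^a_i=D-\{a_0,\dots,a_{i-1}\}$ and $D^b_i=D-\{b_0,\dots,b_{i-1}\}$; by conditions (1)--(2) the vertex $a_i$ (resp.\ $b_i$) is incident with the face of $D^a_i$ (resp.\ $D^b_i$) containing $F$, and by condition (3), for every $i$ the sets $\{a_0,\dots,a_i\}$ and $\{b_0,\dots,b_{s-i}\}$ are disjoint, so peeling off one sequence never spoils the hypotheses needed to peel off the other.

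The first ingredient is a local deletion count. Given a drawing $D'$ of $K_m$ and a vertex $v$ on the boundary of the face containing the reference face, reading the rotation at $v$ from that face linearly orders the remaining $m-1$ vertices; for a pair $\{u,w\}$ among them, the closed curve formed by the three edges $vu,uw,wv$ separates the plane, and the number of the other $m-3$ vertices lying on its non-reference side assigns a \emph{level} in $\{0,\dots,m-3\}$ to the pair. A Jordan-curve parity argument then shows that the number of crossings destroyed by deleting $v$, that is $\crn(D')-\crn(D'-v)$, is governed -- up to a parity correction handled by Kleitman's parity argument applied to $v$ together with four further vertices -- by the vector recording how many pairs have each level. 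Accumulating these counts as $v$ runs through $a_0,\dots,a_i$ and, separately, through $b_0,\dots,b_{s-i}$ defines a cumulative quantity $\mathcal E_{\le i}(D)$, the analogue here of the number of $\le i$-edges.

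Telescoping the $a$-sequence gives $\crn(D)=\crn\bigl(D-\{a_0,\dots,a_s\}\bigr)+\sum_{i=0}^{s}\bigl(\crn(D^a_i)-\crn(D^a_i-a_i)\bigr)$, and likewise for the $b$-sequence; combining the two expansions and regrouping the local contributions by level produces a lower bound of the shape $\crn(D)\ge 2\sum_{i=0}^{s}\mathcal E_{\le i}(D)-c_n$, where $c_n$ depends only on $n$. Condition (3) is exactly what legitimizes the regrouping: since at every matching pair of levels the $a$-peeling and the $b$-peeling act on disjoint vertex sets, the two families of local counts fit together without any crossing being double-counted or missed when the two expansions are merged.

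The heart of the matter -- and the step I expect to be the main obstacle -- is the lower bound $\mathcal E_{\le i}(D)\ge 3\binom{i+2}{2}$ for each $0\le i\le s$, the bishellable analogue of the $\le k$-edge inequality of \cite{twopage}. A single reference-face vertex only ever \emph{sees} one side of each triangle, so its local count alone is far too weak to reach $3\binom{i+2}{2}$; bishellability repairs this, because the count obtained by peeling $a_0,\dots,a_i$ off one end and the count obtained by peeling $b_0,\dots,b_{s-i}$ off the other end apply to disjoint pieces (again by condition (3)) and together recover the full bound -- essentially, between the two ends every short side of every relevant triangle is eventually exposed on the reference face. I would expect to prove this inequality by induction on $i$ (or on the number of vertices), tracking how the exposed structure grows by one vertex per peeling step. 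Granting it, substituting into the displayed estimate and simplifying reproduces the arithmetic verification of \cite{twopage,shell1}, yielding $\crn(D)\ge H(n)$. Finally, the conclusion is trivial for $n\le 4$ and holds unconditionally for $5\le n\le 12$ because $\crn(K_n)=H(n)$ is known there, so one may assume $n\ge 13$, keeping every index range above nonempty.
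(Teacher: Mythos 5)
There is a genuine gap here, and the surrounding machinery does not match the identity you would need. First, the formula from \cite{twopage} expresses $\crn(D)$ in terms of the \emph{doubly} cumulative quantities $E_{\leq\leq k}(D)=\sum_{i=0}^{k}(k+1-i)E_i(D)$, namely
$\crn(D)=2\sum_{k=0}^{\lfloor n/2\rfloor-2}E_{\leq\leq k}(D)-\tfrac12\binom{n}{2}\bigl\lfloor\tfrac{n-2}{2}\bigr\rfloor-\tfrac12(1+(-1)^n)E_{\leq\leq\lfloor n/2\rfloor-2}(D)$.
Your displayed bound $\crn(D)\ge 2\sum_{i}\mathcal E_{\le i}(D)-c_n$, with $\mathcal E_{\le i}$ declared to be ``the analogue of the number of $\le i$-edges,'' conflates the singly and doubly cumulative sums. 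This matters because the inequality you then target, $\mathcal E_{\le i}(D)\ge 3\binom{i+2}{2}$, is the \emph{individual} $\le k$-edge bound: that is what is proved for $2$-page drawings in \cite{twopage}, but for shellable and bishellable drawings only the averaged version $E_{\leq\leq k}(D)\ge 3\binom{k+3}{3}$ is established (this is exactly Lemma~\ref{lm:inequality}, and the reason the $\leq\leq$ quantities were introduced in \cite{shell1}). You are thus aiming at something stronger than what the paper proves, with no argument that it holds for bishellable drawings.

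Second, and more fundamentally, the proposal never proves its central inequality: you flag it as ``the step I expect to be the main obstacle'' and offer only the intention to induct. That inequality \emph{is} the theorem; everything else is bookkeeping inherited from \cite{twopage}. The paper's induction on $k$ deletes $a_0$, notes that $a_1,\dots,a_k$ and $b_{k-1},\dots,b_0$ witness $(k-1)$-bishellability of $D-a_0$, and then accounts for (i) a contribution of at least $2\binom{k+2}{2}$ from the rotation of edges at $a_0$ around the reference face, and (ii) at least $\binom{k+2}{2}$ ``invariant'' edges located at $b_0,\dots,b_k$ whose $k$-edge value is unchanged by deleting $a_0$ --- the latter being precisely where condition (3) enters. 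None of this appears in your sketch. Moreover, the ``local deletion count'' relating $\crn(D')-\crn(D'-v)$ to pair levels ``up to a parity correction handled by Kleitman's parity argument'' is not a workable substitute for the exact identity, which is derived by counting, over all $K_4$'s, how often each edge occurs as a $1$-edge; there is no clean formula of the kind you describe for the crossing deficit of a single vertex deletion in a general good drawing, and merging two complete telescopings of the same $\crn(D)$ (one per sequence) into one inequality without double counting is not justified by condition (3) alone.
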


There are two main remarks to be made here.  First is the pleasant feature that the theorem requires only one value for the amount of bishellability of the drawing.  Second, even though the same principal ideas are used in the two proofs, the proof of Theorem~\ref{th:main} involves a simpler induction than the proof of Theorem~\ref{th:old} in~\cite{shell1}. Both facts are due to the monotonicity of the new definition of bishellability.

To simplify the discussion, we define a drawing $D$ of $K_n$ to be
\emph{shellable} if it is $s$-shellable for some $s \geq \lfloor \frac{n}{2}
\rfloor$, and \emph{bishellable} if it is $(\lfloor \frac{n}{2}
\rfloor\!-\!2)$-bishellable. That is, shellable and bishellable drawings
have at least $H(n)$ crossings. Furthermore, we call a drawing of
$K_n$ \emph{\HHopt\ } if it has $H(n)$ crossings.
We use this notation to keep in mind that drawings with $H(n)$ crossings are only conjectured to be optimal.

Besides the proof of Theorem~\ref{th:main}, we show that bishellability leads to an extended class of drawings for which the Harary--Hill Conjecture is true. 
The drawing $D$ of $K_{11}$ in \figurename~\ref{fig:n11bi} is bishellable (so Theorem~\ref{th:main} shows $\crn(D)\ge H(11) $), but not shellable (so Theorem~\ref{th:old} does not apply). 
Moreover, we show the following theorem; 
see Section~\ref{sec:drawings} for the proof.
\begin{theorem}\label{thm:extension}
	There exists an infinite family of drawings of complete graphs that are bishellable but not shellable.
\end{theorem}

\begin{figure}[htb]
	\centering
	{\includegraphics[page=3,scale=0.92]{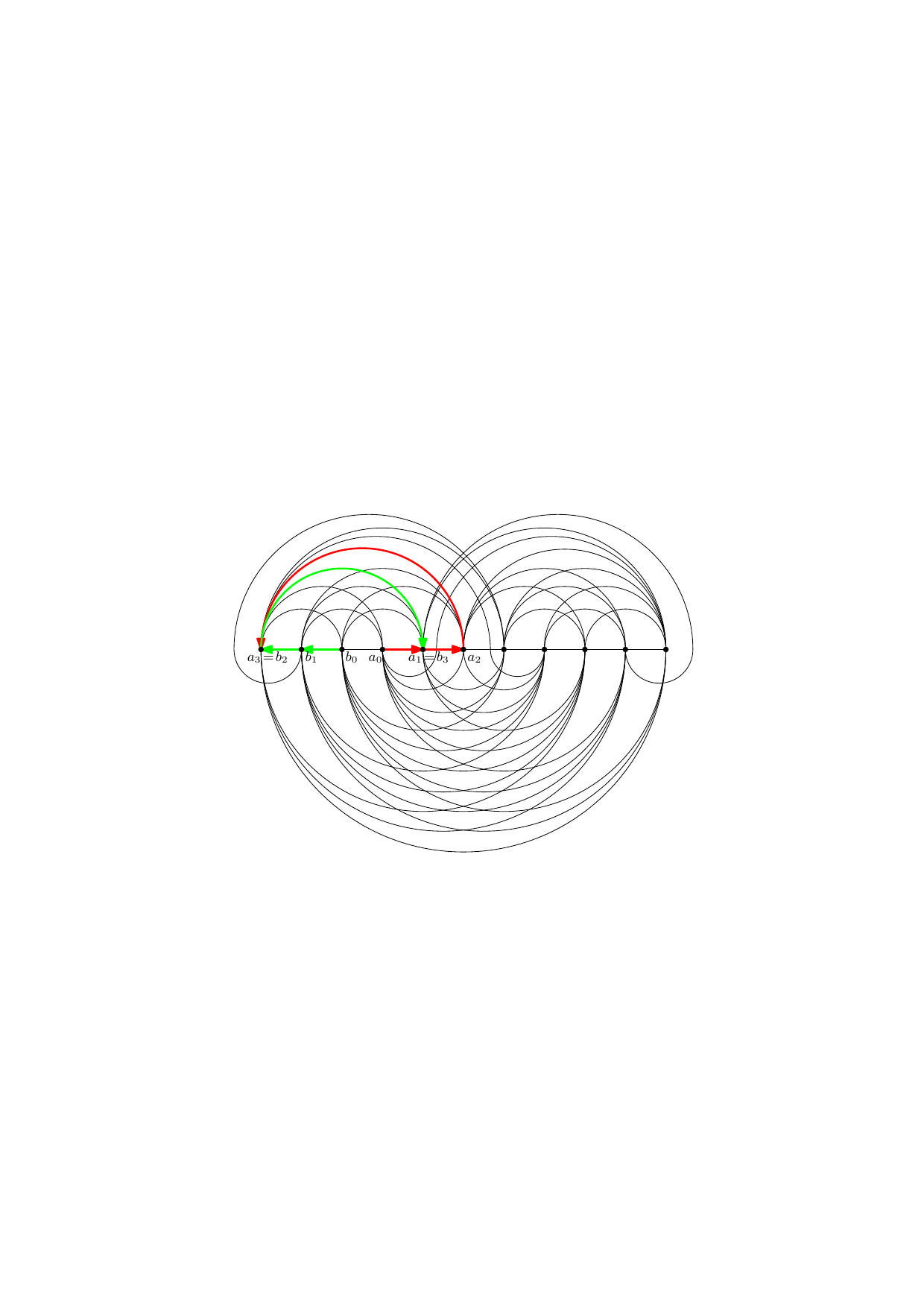}}
\caption{\label{fig:n11bi}
	A \HHopt\ drawing of $K_{11}$ that is bishellable but not $s$-shellable
        for any $s \geq 5$.}
\end{figure}

\section{Bishellable Drawings and the Crossing Number}
In this section, we recall the notion of simple drawings and $k$-edges,
and how to express the crossing number of a drawing $D$ in terms of 
a weighted sum of the numbers of $k$-edges, for $k=0,1,2,\dots,\lfloor \frac{n}{2}\rfloor-1$.
Then we prove that, under the assumption that a drawing $D$ is bishellable, for each relevant $k$, the relevant weighted sum of $k$-edges is large enough to prove that there are at least $H(n)$ crossings in $D$.

\subsection{Preliminaries}

We remark that we consider only {\em simple\/} drawings (also known as good drawings, or simple topological drawings): no edge can cross itself; no two edges with a common incident vertex can cross; and no two edges can cross each other more than once.  It is well-known that any drawing of a graph $G$ with fewest crossings is necessarily simple.

The relation between the number of crossings in a rectilinear (or pseudo-linear) drawing of $ K_n $ and the number of its $ k $-edges was first described by Lov\'asz et al.~\cite{lve} and, independently, by \'Abrego and Fern\'andez~\cite{af}.   \'Abrego et al.~\cite{twopage} generalized the notion of $k$-edges to arbitrary simple drawings, as follows. Fix a drawing $ D $ of $ K_n $ and a face $ F $ of $ D $.    For an edge $uv$ of $D$, we arbitrarily choose one of its orientations:  here we use $u$ to $v$.  For each other vertex $w$, the three vertices $u,v,w$ induce a \Triangle\ $T$ in $ D $.  Because $D$ is a simple drawing, $T$ is a simple closed curve in the sphere (it is slightly simpler technically to consider embeddings in the sphere rather than the plane).

As we traverse $T$ from $u$ to $v$ to $w$ and back to $u$, $T$ has natural right and left sides relative to the directed edge $uv$.  Assign to $w$ the side $R$ (right) if $F$ is on the left side of $T$; otherwise, assign $L$ (left) to $w$. Thus, for each $w\notin \{u,v\}$, $w$ is assigned either $R$ or $L$.  Then $uv$ is a \emph{$k$-edge\/} if $k$ is the smaller of the number of $R$'s and $L$'s (for $uv$). Note that being a $ k $-edge is independent of the orientation of the edge $ uv $ as reversing $ uv $ simply exchanges all the labels $ R $ and $ L $.

We make one small observation that helps the later discussion:  every edge~$e$ that has a segment incident with $F$ is a 0-edge.  To see this, suppose some segment $\tilde e$ of $e$ is incident with $F$ and we orient $e$ so that $F$ is to the left of~$\tilde e$.  Then, for every $w$ not incident with $e$, the \Triangle\ determined by $e$ and $w$ will have $F$ on the left side, showing $w$ is an $R$.  That is, all vertices are $R$ with respect to $e$, so $e$ is a 0-edge, as claimed.

\subsection{Proof of Theorem~\ref{th:main}}\label{sec:mainproof}
One main ingredient of the proof is the relation between
the number of $k$-edges and the number of crossings in a drawing.
This relation was shown for rectilinear drawings in \cite{af,lve} and extended to simple drawings in \cite{twopage},
where it is shown that the number of crossings of a simple drawing can be expressed as a weighted sum of the number of $k$-edges of the drawing.
We include this discussion in the appendix, for the sake of completeness.

Specifically, if we denote by $E_k(D)$ the number of $k$-edges of $D$ and consider
\[
E_{{\leq}k}\left(  D\right)  :=\sum\limits_{j=0}^{k}E_{j}\left(
D\right)
\]
and
\begin{equation}\label{eq:D-sum}
E_{{\leq}{\leq}k}(D)  :=\sum\limits_{j=0}^{k}E_{\leq j}\left(
D\right)  =\sum\limits_{j=0}^{k}\sum\limits_{i=0}^{j}E_{i}\left(
D\right) =\sum\limits_{i=0}^{k}\left(  k+1-i\right)  E_{i}\left(
D\right)\,.
\end{equation}
In \cite{twopage} it is shown that
\begin{equation}\label{eq:twopage}
\crn(D)=2\sum\limits_{k=0}^{\lfloor
	\frac{n}{2}\rfloor-2}E_{{\leq}{\leq}k}(D)-\frac
{1}{2}\binom{n}{2}\left\lfloor \frac{n-2}{2}\right\rfloor -\frac{1}%
	{2}(1+(-1)^{n})E_{\leq\leq\left\lfloor \frac{n}{2}\right\rfloor -2}(D)\,.
\end{equation}
Therefore, a straightforward calculation translates any lower bound on $E_{{\leq}{\leq}k}(D)$ into a lower bound for $\crn(D)$ and, specifically, showing that $\displaystyle E_{{\leq}{\leq}k}(D) \geq 3\binom{k+3}{3}$ for $0 \leq k \leq \lfloor\frac{n}{2}\rfloor- 2$ implies $\crn(D)\geq H(n)$.

\begin{lemma}\label{lm:inequality}
If a drawing $D$  of $K_n$ is $k$-bishellable and $0 \le k\le \lfloor \frac{n}{2}\rfloor- 2$, then
\[
E_{\leq \leq k}(D)\ge 3 {k+3\choose 3}.
\]
\end{lemma}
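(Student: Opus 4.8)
The plan is to induct on $k$. For the base case $k=0$, being $0$-bishellable means there are vertices $a_0$ and $b_0$ incident with $F$, and a $0$-bishellable drawing has $a_0 \neq b_0$, so $F$ is incident with at least two vertices. From the observation in the preliminaries, every edge incident with $F$ (in particular the two edges $a_0b_0$ has with $F$, if $a_0b_0$ bounds $F$ on both sides — more carefully, the edges appearing on $\partial F$) is a $0$-edge. One shows directly that a drawing of $K_n$ with a face incident with at least two vertices has $E_0(D) \ge 3$, hence $E_{\le\le 0}(D) = E_0(D) \ge 3 = 3\binom{3}{3}$. (The count of $3$ comes from the fact that if $a_0,b_0$ both lie on $F$, the edge $a_0b_0$ together with the first edges of the two ``fans'' emanating from $a_0$ and $b_0$ into $F$ give three distinct $0$-edges; this is the standard starting point and should be recorded carefully but is routine.)

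For the inductive step, suppose the claim holds for $k-1$ and let $D$ be $k$-bishellable with witnessing sequences $a_0,\dots,a_k$ and $b_k,\dots,b_0$. The key move is to delete the vertex $a_0$: the drawing $D' := D - a_0$ is a drawing of $K_{n-1}$, it is $(k-1)$-bishellable (use $a_1,\dots,a_k$ and $b_{k-1},\dots,b_0$, noting condition (3) of bishellability is exactly what makes the truncated $b$-sequence avoid $\{a_1,\dots,a_i\}$), and $k-1 \le \lfloor (n-1)/2\rfloor - 2$ when $k \le \lfloor n/2 \rfloor - 2$ — here one must check the floor arithmetic, but it goes through. So by induction $E_{\le\le(k-1)}(D') \ge 3\binom{k+2}{3}$. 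The heart of the argument is then a lower bound on how $E_{\le\le k}$ changes when a vertex incident with $F$ is added back: one wants
\[
E_{\le\le k}(D) \ge E_{\le\le(k-1)}(D') + (\text{contribution of edges at } a_0),
\]
and the contribution of the $n-1$ edges incident with $a_0$ should be at least $3\binom{k+2}{2}$, since $3\binom{k+3}{3} - 3\binom{k+2}{3} = 3\binom{k+2}{2}$. The mechanism: because $a_0$ is incident with $F$, ordering the edges $a_0w_1,\dots,a_0w_{n-1}$ around $a_0$ starting from $F$, the $j$-th edge in this rotation is a $(j')$-edge for $j'$ roughly $\min(j-1,n-1-j)$ — more precisely, edges incident with a vertex on the reference face realize all the ``small'' $k$-values, so among the edges at $a_0$ there are at least two $0$-edges, two $1$-edges, \dots, contributing at least $2(k+1 - i)$ for each $i \le k$ counted with the $E_{\le\le}$ weights; but we must do this symmetrically using both $a_0$ (near $F$ from one side) — actually the bishellability is what guarantees we can find a second apex — and combine. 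I will set up the precise count of $k$-edges at a vertex on $F$ as a preliminary sublemma.

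The main obstacle I anticipate is the bookkeeping that converts ``$a_0$ is incident with $F$'' into a sharp lower bound on $\sum_{i=0}^{k}(k+1-i)\cdot(\#\,i\text{-edges at }a_0)$, and in particular making sure we are not double-counting edges when we also invoke the $b$-side of the bishellability. The clean way is: the edges at $a_0$ whose $i$-value is small are governed by where the other endpoint sits relative to $F$ in $D$, and the sequences $a_0,a_1,\dots$ and $b_k,\dots,b_0$ tell us that many vertices are ``visible'' from $F$ even after peeling; translating ``visible after peeling $j$ vertices'' into ``this edge is a $(\le j)$-edge'' is the delicate step and is where the bishellability hypothesis (as opposed to just one vertex on $F$) is essential. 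I expect the induction itself to be short once this sublemma is in hand — indeed the advertised simplification over Theorem~\ref{th:old} is precisely that deleting $a_0$ lands us back in the same statement with $k$ and $n$ both dropped by one, with no need to track a circular sequence.
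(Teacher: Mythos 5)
Your overall architecture matches the paper's proof: induct on $k$, delete $a_0$, use the truncated sequences to see that $D-a_0$ is $(k-1)$-bishellable, and then account for the increase from $E_{\le\le (k-1)}(D-a_0)$ to $E_{\le\le k}(D)$. But there is a genuine gap in the accounting. You need a gain of $3\binom{k+3}{3}-3\binom{k+2}{3}=3\binom{k+2}{2}$, and you propose to extract all of it from the edges incident with $a_0$. That cannot work: ordering the rotation at $a_0$ starting from the corner at $F$ gives two fans $e_0,\dots,e_k$ and $e'_0,\dots,e'_k$ in which $e_i$ (resp.\ $e'_i$) is a $(\le i)$-edge, and these contribute at least $2\bigl((k+1)+k+\cdots+1\bigr)=2\binom{k+2}{2}$; the remaining edges at $a_0$ may be $j$-edges with $j>k$ and contribute nothing. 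So the $a_0$-edges supply only $2\binom{k+2}{2}$, and the missing $\binom{k+2}{2}$ has to come from somewhere else.

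The idea you are missing is that of \emph{invariant} edges, which is precisely where the second sequence $b_k,\dots,b_0$ earns its keep. An $i$-edge of $D-a_0$ is either an $i$-edge or an $(i+1)$-edge of $D$; in the latter case its weight in $E_{\le\le k}(D)$ is $(k+1)-(i+1)=k-i$, exactly its weight in $E_{\le\le(k-1)}(D-a_0)$, but in the former (invariant) case its weight increases by $1$. The paper shows that $b_0$ is incident with at least $k+1$ edges that are $j$-edges with the \emph{same} $j$ in both $D$ and $D-a_0$ (because $a_0$ is an $R$ for each edge in the relevant fan at $b_0$, so deleting $a_0$ removes a majority-side vertex), and then repeats the argument for $b_i$ inside $D-\{b_0,\dots,b_{i-1}\}$ to find $k+1-i$ further invariant edges, for a total of $\binom{k+2}{2}$. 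Condition (3) of bishellability is what guarantees the $a$- and $b$-sequences stay disjoint so this double use is legitimate, and one still has to check that invariance relative to $D_i$ versus $D_i-a_0$ transfers to invariance relative to $D$ versus $D-a_0$ (the paper does this by noting the $R/L$ values of the already-deleted $b_j$'s are unaffected by the presence of $a_0$). Without this second source of weight your induction falls short by $\binom{k+2}{2}$ at every step. Your base case also needs no appeal to $a_0\ne b_0$: any face of a good drawing of $K_n$ is incident with at least three edge segments, and each such edge is a $0$-edge by the observation in the preliminaries.
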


\begin{proof}[Proof of Theorem~\ref{th:main}]
Since $D$ is $(\lfloor \frac{n}{2}\rfloor\!-\!2)$-bishellable, for each $k=0,1,\dots, \lfloor \frac{n}{2}\rfloor -2$, the drawing $D$ is $k$-bishellable.  Therefore, 
Lemma~\ref{lm:inequality} implies that,
\[
 \text{ for all} \ k\in \{0,1,\dots,\left\lfloor \frac{n}{2}\right\rfloor-2\}, \quad E_{\leq \leq k}(D)\ge 3{k+3\choose 3}\,,
\]
as required.  
Plugging these lower bounds into Equation~\ref{eq:twopage} 
the desired lower bound of $H(n)$ on the number of crossings in $D$ follows (for details see the proof of Theorem~3 in~\cite{twopage}).
\end{proof}

\begin{proof}[Proof of Lemma~\ref{lm:inequality}]
We essentially follow the ideas of the proofs in~\cite{shell1}, but use a simpler and more direct approach provided by the new concept of bishellability.
We proceed by induction on $k$. 
The base case of $k=0$ is trivial, as the reference face $F$ is incident with at least three edges and each of these is a 0-edge.  
Thus,
\[ E_{\leq \leq 0}(D) = \sum_{i=0}^0(0+1-i)E_i(D) = E_0(D)\ge 3=3{0+3\choose 3}\,, \]
as required.

For the induction step, let $a_0,a_1,\dots,a_k$ and $b_k,b_{k-1},\dots,b_0$ be sequences witnessing $k$-bishellability and consider the drawing $D-a_0$.  Then $a_1,\dots,a_k$, $b_{k-1},\dots,b_0$ show it is $(k\!-\!1)$-bishellable and, since $k-1\le (\lfloor \frac{n}{2}\rfloor -2)-1 \le \lfloor \frac{n-1}{2}\rfloor -2$, the induction implies that

\[ E_{\leq \leq k-1}(D - a_0) = \sum_{i=0}^{k-1} ((k-1)+1-i)E_i(D-a_0)\ge 3 {(k-1)+3\choose 3}\,, \]
 which can be rewritten as 
\begin{equation}\label{eq:D-a-sum}
 	E_{\leq \leq k-1}(D - a_0) = \sum_{i=0}^{k-1} (k-i)E_i(D-a_0)\ge 3 {k+2\choose 3}\,.
\end{equation}

Consider an edge $e$ in $D-a_0$.  If $e$ is an $i$-edge in $D-a_0$ with $i\le \lfloor \frac{n-1}{2}\rfloor -2$, then it is either an $i$-edge or an $(i+1)$-edge of $D$, depending on whether $a_0$ joins the majority or minority part of the $R$'s and $L$'s with respect to $e$ in $D-a_0$.  We call those that are $i$-edges in both $D-a_0$ and $D$ \emph{invariant}.

Note that the coefficient of a non-invariant $i$-edge in the sum for $E_{\leq \leq k-1}(D - a_0)$ in Equation~(\ref{eq:D-a-sum}) is $k-i$. Its coefficient in the sum for $E_{\leq \leq k-1}(D)$ in Equation~(\ref{eq:D-sum}) is $(k+1)-(i+1) = k-i$.  Thus, its contribution to both sums is the same.  On the other hand, an invariant $i$-edge contributes $k-i$ to the sum  for $E_{\leq \leq k-1}(D - a_0)$ in Equation~(\ref{eq:D-a-sum}) and $k+1-i$ to the sum  for $E_{\leq \leq k-1}(D)$ in Equation~(\ref{eq:D-sum}).

There is an additional contribution to the $D$-sum from the edges incident with~$a_0$, which we will discuss shortly.
Hence, we altogether obtain
\[
\sum_{i=0}^k (k+1-i)E_i(D)\ge \sum_{i=0}^{k-1}(k-i)E_i(D-a_0) + |\textrm{invariant edges}| + \textrm{contribution of $a_0$-edges}\,.
\]
We shall prove that there are at least $k+2\choose 2$ invariant edges and the contribution of the edges incident with $a_0$ is at least $2{k+2\choose 2}$.  Together with the induction assumption applied to $\sum_{i=0}^{k-1}(k-i)E_i(D)$, we conclude that
\[ E_{\leq \leq k} (D) = \sum_{i=0}^k(k+1-i)E_i(D) \ge 3{k+2\choose 3} + 3{k+2\choose 2} = 3{k+3\choose 3}\,, \]
as required.

Next we consider the edges incident with $a_0$. Let $e_0$ and $e'_0$ be the two edges incident with $a_0$ 
that are incident with the reference face $F$ in some small environment of $a_0$. 
Consequently, the fact that $k\le (\frac{n}{2})-2$ shows that we may write the cyclic rotation of the edges incident with $a_0$ as $(e_0,e_1,\dots,e_k,\dots,e'_k,e'_{k-1},\dots,e'_1,e'_0)$.

To make the discussion uniform, orient all edges incident with $a_0$ away from~$a_0$.  Consi\-deration of any \Triangle\ $(a_0,u,v)$ shows that, if, relative to the edge $a_0u$, $v$ is $L$, then, relative to $a_0v$,  $u$ is~$R$.

We arbitrarily choose the orientation of the sphere so that all the vertices not incident with $e_0$ are $R$'s for $e_0$.
Because deleting $e_0,e_1,\dots,e_{i-1}$ puts $e_i$ into the boundary of the (extended) reference face, we see that $e_i$ has at most $i$ $L$'s and all the rest are $R$'s.  From this it is immediate that $e_i$ is an $\leq i$-edge, as required.

In particular, letting $j$ be the integer such that $e_i$ is a $j$-edge, we know that $e_i$ contributes  $k+1-j$ to the sum $\sum_{i=0}^k(k+1-i)E_i(D)$.  Since $j\le i$, $k+1-j\ge k+1-i$, $e_i$ contributes at least $k+1-i$ to the sum.  Therefore, $e_0,e_1,\dots,e_k$ contribute at least $(k+1)+k+\cdots+2+1={k+2\choose 2}$ to the sum $\sum_{i=0}^k(k+1-i)E_i(D)$.  Likewise, $e'_0,e'_1,\dots,e'_k$ contribute at least the same amount, as desired for the contribution from the edges incident with $a_0$.

The invariant edges are determined by the $b_i$'s.  The main point is that $b_0$ is incident with at least $k+1$ invariant edges upon deletion of $a_0$.
To see this, we observe that if $f_0$ and $f'_0$ are the edges incident with $b_0$ 
that are incident with $F$ in some small environment of $b_0$ then
the symmetry $a_0\leftrightarrow b_0$ in the definition of bishellable drawings implies that the cyclic rotation of edges at $b_0$ can be taken as $(f_0,f_1,\dots,f_k,\dots,f'_k,f'_{k-1},\dots,f'_1,f'_0)$.

We may choose the labelling of the $f_i$s and $f'_i$s so that $a_0b_0$ is not one of $f_0,f_1,\dots,f_k$.
Then, by the same arguments as for $e_i$ before, we know that for every $0\le i\le k$, $f_i$ is an $\leq i$-edge,
as only the non-$b_0$ endpoints of $f_0,f_1,\dots,f_{i-1}$ are possible $L$'s (say) while all the remaining vertices must be $R$'s.
In particular, $a_0$ is an $R$ for all of $f_0,f_1,\dots,f_k$.  It follows that, for some $j\le i$, $f_i$ is a $j$-edge in both $D$ and $D-a_0$.  That is, $b_0$ is incident with at least $k+1$ invariant edges.
	
Here is the other significant simplification due to bishellability.  The identical argument applies to the vertex $b_i$ in the $(k-i)$-bishellable drawing $D_i= \linebreak D-\{b_0,b_1,\dots,b_{i-1}\}$, as witnessed by the sequences $a_0, a_1, \dots, a_{k-i}$ and \linebreak $b_k, b_{k-1}, \dots, b_i$.  The vertex $b_i$ (in the role corresponding to $b_0$ in the preceding paragraph) is incident with at least $(k-i+1)$ edges invariant (relative to $D_i$ and $D_i-a_0$).   The values ($R$ or $L$) of $b_{i-1},b_{i-2},\dots,b_0$ are, for each of the edges incident with~$b_i$, independent of whether we consider $D_i$ or $D_i-a_0$.  Thus, each edge of $D_i$ incident with $b_i$ is a $j$-edge of $D$ if and only if it is a $j$-edge in $D-a_0$.  It follows that $D$ has, in total, at least $(k+1)+k+\cdots+1 = \binom{k+2}2$ invariant edges incident with $b_k,b_{k-1},\dots,b_0$, as required.
\end{proof}

\section{Bishellable and Non-bishellable Drawings}
\label{sec:drawings}

So far, our goal was to to  
simplify the notion of shellability and to simplify the proof that shellable drawings of $K_n$ have at least $H(n)$ crossings.  
However, there is also 
interest in understanding the distinctions between shellable, bishellable, and general drawings.

Two Harary--Hill optimal drawings of $K_n$ with $n\ge 11$ odd are given in~\cite{nonshell}. 
One especially relevant to us has every edge crossed at least once. 
\figurename~\ref{fig:n11non_n09non}~(left) depicts this drawing for $K_{11}$. 
In particular, no face of this drawing is incident with two vertices, implying that the drawing cannot be $s$-bishellable for any $s\ge 0$.  
The smallest complete graph where a non-bishellable drawing exists is $K_6$. 
However, that example is not \HHopt. 
The smallest \HHopt\ drawing which is not bishellable is 
one of $K_9$; see \figurename~\ref{fig:n11non_n09non}~(right). 
Both latter drawings have been obtained by exhaustive computations.
Moreover, by~\cite{nonshell}, there are Harary--Hill optimal drawings with arbitrarily many vertices that are neither shellable nor bishellable.

\begin{figure}[htb]
	\centering
	{\includegraphics[page=1,scale=0.9]{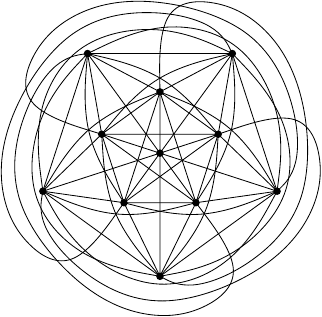}} \hspace{1cm}
	{\includegraphics[page=3,scale=0.9]{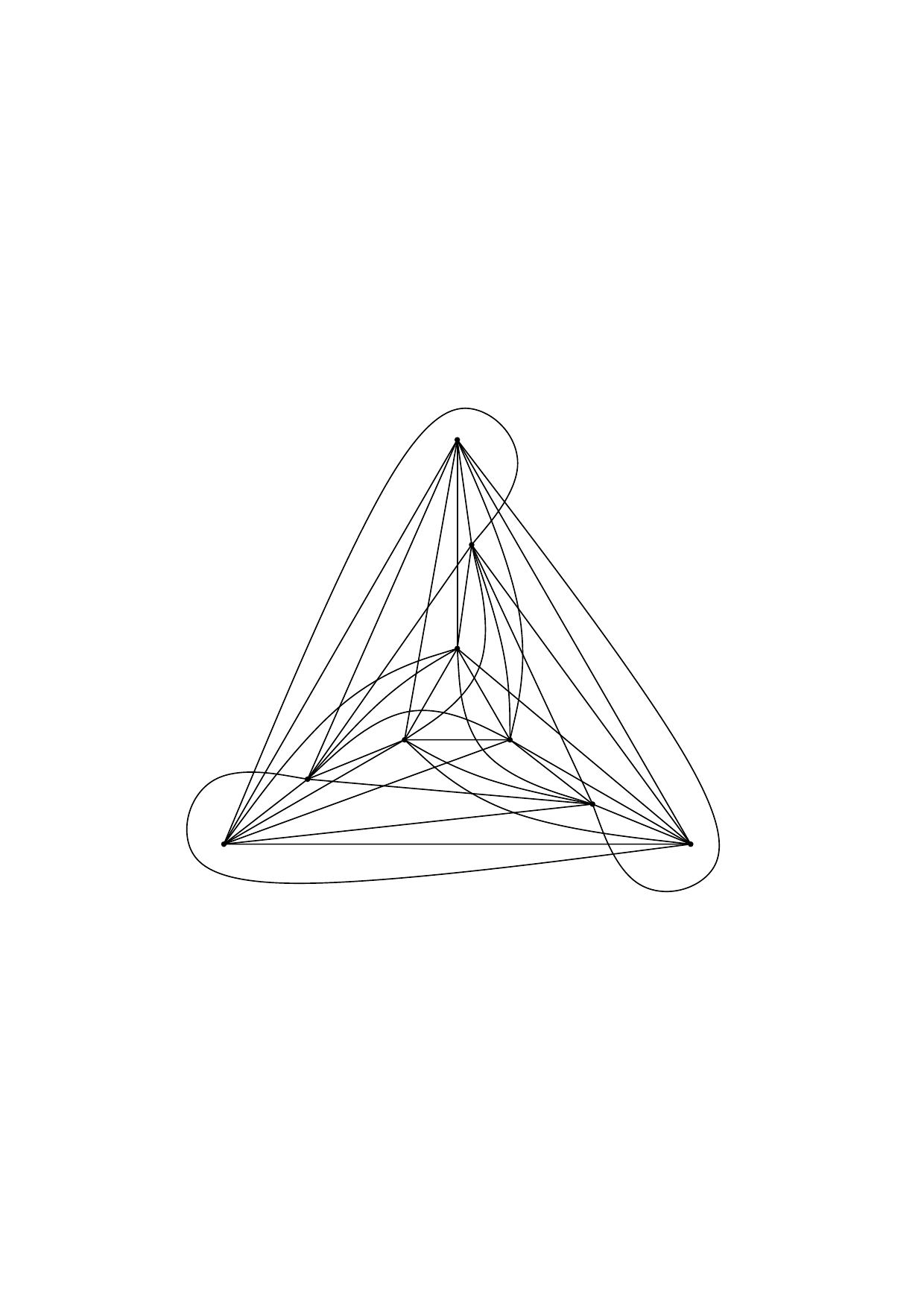}}
\caption{\label{fig:n11non_n09non}
	A non-bishellable \HHopt\  drawing of $K_{11}$ from~\cite{nonshell} where all edges are
		crossed (left) and  
	a \HHopt\ drawing of $K_9$ that is not bishellable (right).}
\end{figure}

On the other hand, any cylindrical drawing of $K_n$ has a cycle of length at least $\lceil \frac{n}{2}\rceil$ having no edges crossed.  
Such a cycle shows that the drawing has a shelling sequence of length at least $\lceil \frac{n}{2}\rceil$. 
Hence it is shellable and also bishellable.  

To distinguish between shellable and bishellable drawings is more subtle.  
The smallest example of a \HHopt\ drawing that is not shellable but bishellable is the drawing of $K_{11}$ shown in \figurename~\ref{fig:n11bi}. 
The two sequences $a_0,a_1,a_2,a_3$ and $b_3,b_2,b_1,b_0$ proving bishellability are indicated in the drawing. 
The example is symmetric and almost a 2-page book drawing or a monotone drawing. 
There are just two edges of the spine that are crossed and two edges that are non-monotone.
It is not straightforward to see that this drawing is indeed not shellable. 
Thus, to confirm non-shellability we used computations, checking all possible shelling sequences.

\noindent
Intuitively, the main difference between shellability and bishellability is that shellability requires a closed cycle of vertices as shelling sequence, while bishellability allows ``flying'' ends of the two sequences. 
We now make use of this fact to show Theorem~\ref{thm:extension}, which states the existence of an infinite number of drawings of complete graphs that are bishellable but not shellable.

\begin{proof}[Proof of Theorem~\ref{thm:extension}]
Consider first the gadget depicted in \figurename~\ref{fig:gadget01}. The drawn edges form two faces that are bounded by 3 edges each (two edges incident to $v$ and a third one that intersects the first two), each acting as a ``wall'' for shellability.
To see that, assume that the starting face for a shelling sequence lies in the exterior, i.e., in the area~$A$. 
In order to reach vertices from area $B$ during the shelling process, at least one of the edges bounding the inner face (green dashed edges) must be removed.  

\begin{figure}[htb]
	\centering
	{\includegraphics[page=1,scale=1]{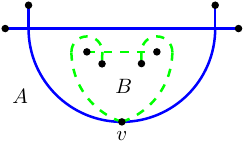}}
\caption{\label{fig:gadget01}
	Gadget with two walls (drawn dashed green and solid blue) and one gate $v$ that blocks any shelling cycle containing points from both $A$ and $B$. 
	}
\end{figure}

Observe that $v$ is the only vertex which is accessible from the exterior. 
Hence, $v$ is the  unique ``gate'' for any shelling sequence to continue from vertices of $A \setminus \{v\}$ to vertices of $B \setminus \{v\}$. 
But shellability requires that we can shell along a cycle in both directions, i.e., we would need two gates like~$v$. 
Thus, a shelling sequence that starts in $A$ (including vertex $v$) and contains at least one vertex of $A \setminus \{v\}$ cannot contain any vertices from~$B \setminus \{v\}$.
Similarly, a shelling that starts in $B$ (including $v$) is blocked by the wall formed by the boundary of the outer face (blue solid edges) of the gadget, where again vertex $v$ builds the unique gate. 
Thus, we conclude that any shelling sequence can contain either vertices from $A \setminus \{v\}$  or $B \setminus \{v\}$, but not both ($v$ can be contained in both cases).
Note, however, that this restriction does not apply to bishellability, as we do not require the sequence to work in both directions and thus one gate is sufficient. 

Our goal is to extend the gadget with vertices in the regions $A$ and $B$ such that any 
possible shelling sequence of length at least $\lfloor \frac{n}{2}\rfloor$ 
must contain vertices in each of the regions $A$ and $B$. 
In particular, then $v$ must be an internal vertex of the sequence and hence $A$ and $B$ must each contain one end of the sequence. 
Since for shellability, the ends of the sequence must be incident with the same face of the drawing, this then constitutes a proof of non-shellability. 

\begin{figure}[htb]
	\centering
	{\includegraphics[page=1,scale=1]{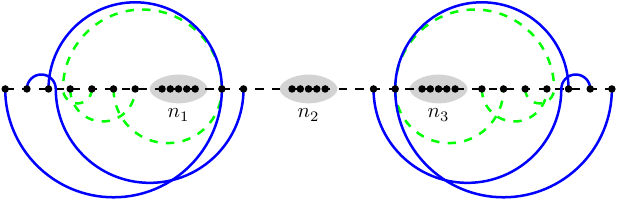}}
\caption{\label{fig:family_nonshellbishell}
	Base construction for non-shellable 
	but bishellable drawings. 
	}
\end{figure}

Consider the drawing in \figurename~\ref{fig:family_nonshellbishell}. It contains two copies of the gadget from \figurename~\ref{fig:gadget01} and is drawn in a way that all vertices are placed on a horizontal line called ``spine'', similar to monotone or 2-page book drawings.
In addition to the two gadgets, $n_2$ points are placed in the region corresponding to $A$ in \figurename~\ref{fig:gadget01} and $n_1$ points and $n_3$ points, respectively, are placed inside the regions corresponding to $B$. 
The above arguments imply that if $n_1+5 < \lfloor \frac{n}{2} \rfloor$, $n_2+10 < \lfloor \frac{n}{2} \rfloor$, and $n_3+5 < \lfloor \frac{n}{2} \rfloor$ then the drawing is not $s$-shellable for any $s \geq \lfloor \frac{n}{2} \rfloor$. Concrete values for general $n$ are for example $n_1 = \lfloor \frac{n-11}{3} \rfloor$, $n_2 = \lfloor \frac{n-27}{3} \rfloor$, and $n_3 = \lfloor \frac{n-13}{3} \rfloor$ with $n \geq 27$. But also other cardinalities and smaller sets are possible, for example $n_1=n_3=2$, $n_2=0$ and thus $n=22$. 
On the other hand, the drawing is $(\lfloor \frac{n}{2} \rfloor-1)$-bishellable by the following sequences: 
start with two neighboring vertices in the middle of the central set (the one containing the $n_2$ points) and continue with the vertices to the left and right, respectively, according to their order along the spine. 

\noindent
Further, 
the drawing can be completed to a simple drawing of $K_n$ in many ways such that 
the resulting drawing is still bishellable with the just described sequences.
In the following, we give a description of an example set of valid extensions, 
where most edges are drawn as half-circles, similar to the usual way of 2-page book drawings, and some edges are drawn as $s$-shaped arcs consisting of two half circles.
Note that in  \figurename~\ref{fig:family_nonshellbishell}, exactly two edges of each gadget from \figurename~\ref{fig:gadget01} are drawn as a combination of two half-circles, both incident to the gate-vertex, while the rest of the edges is drawn as half-circles.
In \figurename~\ref{fig:family_forcededges}, the endpoints of all edges that are not half-circles are marked with red (larger) disks and labeled. 
We draw the edge $p_1p_2$ as $s$-shaped arc such that it doesn't cross with $p_vp_1$ and $p_vp_2$.
Likewise, we draw the edge $q_1q_2$ as $s$-shaped arc such that it doesn't cross with $q_vq_1$ and $q_vq_2$.
The remaining edges are drawn as half circles, where for some of them, the side of the spine is fixed by the following rules:
(i) some remaining edges incident with $p_1$ or $q_1$ have to be drawn above the spine to avoid crossings with $p_1p_3$ or $q_1p_3$,
(ii) some remaining  edges incident with $p_2$ or $q_2$ have to be drawn below the spine to avoid crossings with $p_2p_v$ or $q_2q_v$,
(iii) one remaining  edge each incident with $p_v$ and $q_v$, respectively, has to be drawn above the spine to avoid crossings with $p_2p_v$ or $q_2q_v$, and
(iv) some remaining  edges incident with $p_3$ and $q_3$ have to be drawn below the spine to avoid crossings with $p_1p_3$ or $q_1q_3$.
\figurename~\ref{fig:family_forcededges} depicts the edges that are forced to be $s$-shaped or on one fixed side. 
All remaining edges can be drawn as half circles on either side of the spine.
\end{proof}

\begin{figure}[htb]
	\centering
	{\includegraphics[page=4,scale=1]{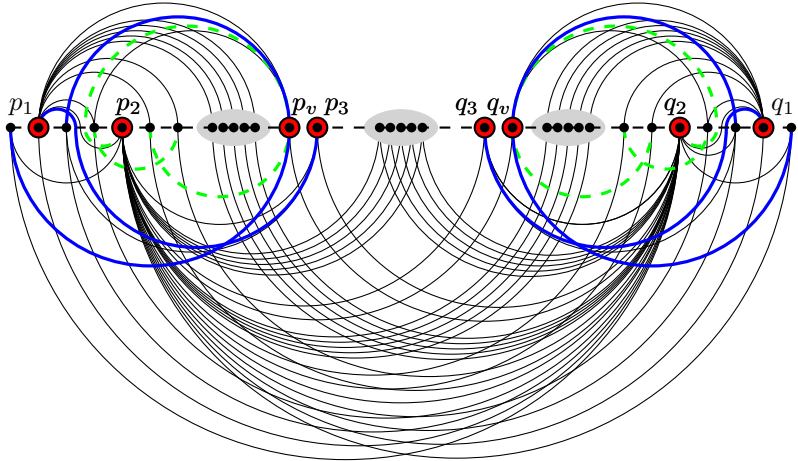}}
\caption{\label{fig:family_forcededges}
	Forced edges when extending the drawing of \figurename~\ref{fig:family_nonshellbishell} to a bishellable drawing of $K_n$. 
	}
	\vspace{-1ex}
\end{figure}

Finally, note that 
in any drawing according to the construction in the above proof, only two of the edges, namely, $p_2p_v$ and $q_2q_v$, are non-monotone. 
So the constructed drawings are 
very close to drawings that are known to be shellable.

\section{Conclusion}

Recent progress on the rectilinear crossing number of $K_n$~\cite{aichholzer2007new,abrego2012k} has depended on making more refined estimates than that provided by Lemma~\ref{lm:inequality}.  These refinements cannot occur in the context of the (topological) crossing number considered in this paper, as the bound in Lemma~\ref{lm:inequality} cannot be improved for a drawing having $H(n)$ crossings.

Our computations show that about $43.8\%$ of the 403,079 (up to weak isomorphism) \HHopt\ drawings of $K_{11}$ are not bishellable.
It seems likely that, as $n$ grows, the proportion of bishellable \HHopt\ drawings vanishes.
However, Theorem~\ref{th:main} (indeed Theorem~\ref{th:old} is enough) implies, for example, that any drawing with $K_{\frac{n}{2}}$ drawn in the southern hemisphere (using an uncrossed $\frac{n}{2}$-cycle on the equator) and the remaining $K_{\frac{n}{2}}$ drawn in the northern hemisphere (arbitrarily) must have at least  $H(n)$ crossings.
These kinds of drawings play a role in the computer-free proof~\cite{McQR14} that $\crn(K_9)=36$.

We close with some open problems:
\begin{itemize}
\item Can we construct a family of \HHopt\ drawings of $K_n$
  which are non-shellable, but bishellable, similar to the
  constructions in~\cite{nonshell}? The drawing of $K_{11}$ shown in
  \figurename~\ref{fig:n11bi} might be a good start.
\item Is there a concept similar to bishellability that does not
  require the starting vertices of the sequences to share a cell, but
  still implies at least $H(n)$ crossings? This would mean that each
  of the two sequences could have their own (local) reference faces.
\end{itemize}

\paragraph*{Acknowledgments.}
Research for this article was initiated during the \emph{AIM Workshop on
Exact Crossing Numbers} held from April 28 to May 2, 2014 in Palo
Alto (California, USA).  We are grateful to the American Institute of
Mathematics (AIM) for their support and we thank the participants of
that workshop for fruitful and inspiring discussions.

Furthermore,
O.A.\ and B.V.\ were partially supported by the ESF EUROCORES programme EuroGIGA -- CRP ComPoSe, Austrian Science Fund (FWF): I648-N18;
S.F.-M.~was supported by the NSF grant DMS-1400653;
B.M.\ was supported in part by an NSERC Discovery Grant R611450 (Canada), by the Canada Research Chairs program, and by the research project J1-8130 of ARRS (Slovenia).
P.M.\ was partially supported by the ESF EUROCORES programme EuroGIGA -- CRP GraDR, 10-EuroGIGA-OP-003 (DFG);
P.R.\ was partially supported by MINECO project MTM2014-54207 and ESF EUROCORES programme EuroGIGA -- CRP ComPoSe, MICINN Project EUI-EURC-2011-4306; and
R.B.R.\ was supported by NSERC grant number 41705-2014 057082.


\begin{thebibliography}{10}

\bibitem{twopage}
Bernardo~M. {\'{A}}brego, Oswin Aichholzer, Silvia Fern{\'{a}}ndez{-}Merchant,
  Pedro Ramos, and Gelasio Salazar.
\newblock The 2-page crossing number of {$K_n$}.
\newblock {\em Discrete {\&} Computational Geometry}, 49(4):747--777, 2013.

\bibitem{monotone}
Bernardo~M. {\'{A}}brego, Oswin Aichholzer, Silvia Fern{\'{a}}ndez{-}Merchant,
  Pedro Ramos, and Gelasio Salazar.
\newblock More on the crossing number of {$K_n$}: Monotone drawings.
\newblock {\em Electronic Notes in Discrete Mathematics}, 44:411--414, 2013.

\bibitem{shell1}
Bernardo~M. {\'{A}}brego, Oswin Aichholzer, Silvia Fern{\'{a}}ndez{-}Merchant,
  Pedro Ramos, and Gelasio Salazar.
\newblock Shellable drawings and the cylindrical crossing number of {$K_n$}.
\newblock {\em Discrete {\&} Computational Geometry}, 52(4):743--753, 2014.

\bibitem{nonshell}
Bernardo~M. {\'{A}}brego, Oswin Aichholzer, Silvia Fern{\'{a}}ndez{-}Merchant,
  Pedro Ramos, and Birgit Vogtenhuber.
\newblock Non-shellable drawings of {$K_n$} with few crossings.
\newblock In {\em CCCG 2014, Halifax, Nova Scotia, Canada}, 2014.

\bibitem{abrego2012k}
Bernardo~M. {\'A}brego, Mario Cetina, Silvia Fern{\'a}ndez-Merchant, Jes{\'u}s
  Lea{\~n}os, and Gelasio Salazar.
\newblock On $(\leq k)$-edges, crossings, and halving lines of geometric
  drawings of $k_n$.
\newblock {\em Discrete \& Computational Geometry}, 48(1):192--215, 2012.

\bibitem{af}
Bernardo~M. {\'{A}}brego and Silvia Fern{\'{a}}ndez{-}Merchant.
\newblock A lower bound for the rectilinear crossing number.
\newblock {\em Graphs and Combinatorics}, 21(3):293--300, 2005.

\bibitem{aichholzer2007new}
Oswin Aichholzer, Jesus Garcia, David Orden, and Pedro Ramos.
\newblock New lower bounds for the number of $(\leq k)$-edges and the
  rectilinear crossing number of $k_n$.
\newblock {\em Discrete \& Computational Geometry}, 38(1):1--14, 2007.

\bibitem{balko}
Martin Balko, Radoslav Fulek, and Jan Kyn{\v{c}}l.
\newblock Crossing numbers and combinatorial characterization of monotone
  drawings of {$K_n$}.
\newblock {\em Discrete and Computational Geometry}, pages 1--37, 2014.

\bibitem{bw}
Lowell Beineke and Robin Wilson.
\newblock The early history of the brick factory problem.
\newblock {\em Math. Intelligencer}, 32(2):41--48, 2010.

\bibitem{bk}
J.~Bla{\v{z}}ek and M.~Koman.
\newblock A minimal problem concerning complete plane graphs.
\newblock In {\em Theory of Graphs and its Applications, Proceedings of the
  Symposium held in Smolenice in June 1963}, volume~8, pages 113--117, 1964.

\bibitem{guy1972crossing}
Richard~K. Guy.
\newblock Crossing numbers of graphs.
\newblock In {\em Graph Theory and Applications}, pages 111--124. Springer,
  1972.

\bibitem{hh}
Frank Harary and Anthony Hill.
\newblock On the number of crossings in a complete graph.
\newblock In {\em Proc. Edinburgh Math. Soc.}, volume~13, pages 333--338, 1963.

\bibitem{lve}
L\'aszl\'o Lov\'asz, Katalin Vesztergombi, Uli Wagner, and Emo Welzl.
\newblock Convex quadrilaterals and k-sets. towards a theory of geometric
  graphs.
\newblock In J.~Pach, editor, {\em AMS Contemp. Math. Series}, volume 342,
  pages 139--148, 2004.

\bibitem{McQR14}
Dan McQuillan and R.~Bruce Richter.
\newblock On the crossing number of {$K_n$} without computer assistance.
\newblock {\em Journal of Graph Theory}, 82(4):387--432, 2016.

\bibitem{PanR07}
Shengjun Pan and R.~Bruce Richter.
\newblock The crossing number of {$K_{11}$} is 100.
\newblock {\em Journal of Graph Theory}, 56(2):128--134, 2007.

\end{thebibliography}

\newpage

\appendix
\section* {Appendix: Crossings and $k$-edges} \label{App:AppendixA}

If we consider simple drawings of $K_4$, it is well-known that there are only two such drawings up to spherical homeomorphisms.  However, the one with a crossing has, again up to spherical homeomorphisms, two different faces:  one is incident with a 4-cycle and the other is incident with two vertices and the crossing.
It is easy to verify that:
\begin{enumerate}
	\item if $F$ is bounded by the 4-cycle, then the four edges of the 4-cycle are all 0-edges while the crossing edges are both 1-edges;
	\item if $F$ is incident with just two vertices and the crossing, then the two crossing edges are 0-edges, the full edge incident with $F$ is a 0-edge, as is its opposite edge in the uncrossed 4-cycle, and the other two edges in the uncrossed 4-cycle are 1-edges; and
	\item in the case of the planar drawing of $K_4$, the three edges incident with $F$ are the 0-edges, and the other three edges are the 1-edges.
\end{enumerate}

The preceding paragraph can be used to relate crossings in a drawing $D$ of $K_n$ and $k$-edges.  Each crossing determines a $K_4$ containing precisely two 1-edges (relative to the face containing the reference face $F$ of $D$).   Each non-crossing $K_4$ contains three 1-edges.

It follows that if we count the number of ordered pairs $(e,K_4)$ so that $e$ is a 1-edge of this $K_4$, the total number we get is $3P+2N$, where $P$ is the number of planar $K_4$'s and $N$ is the number of non-planar $K_4$'s.  On the other hand, if $uv$ is a $k$-edge, and $w,w'$ are distinct vertices both different from both $u$ and~$v$, then the $K_4$ induced by $u,v,w,w'$ has $uv$ as a 1-edge if and only if one of $w$ and $w'$ is an $R$ (relative to $uv$) and the other is an $L$.  If follows that every $ k $-edge $uv$ is a 1-edge in $k(n-2-k)$ different $K_4$'s.  That is, the number of pairs $(e,K_4)$ where $e$ is a 1-edge of the $K_4$ is
$
\sum_{k=0}^{\lfloor \frac{n}{2}\rfloor-1} k(n-2-k)E_k(D)\,,
$
where, for $k\le \lfloor \frac{n-2}{2}\rfloor = \lfloor \frac{n}{2}\rfloor-1$, $E_k(D)$ denotes the number of $k$-edges in $D$.

The conclusion (exactly as in~\cite{twopage,af,lve}) is that
\begin{equation}\label{eq:1-edge}
 3P+2N=\sum_{k=0}^{\lfloor \frac{n}{2}\rfloor-1} k(n-2-k)E_k(D)\,.
\end{equation}
On the other hand, $P+N$ is the total number of $K_4$'s in $K_n$, so
\begin{equation}\label{eq:numberK4's}
P+N={n\choose 4}\,.
\end{equation}
Multiply Equation~(\ref{eq:numberK4's}) by $3$, subtract Equation~(\ref{eq:1-edge}), and use the obvious fact that $N=\crn(D)$ to conclude that
\begin{equation} 
	\crn(D) = 3{n\choose 4} - \sum_{k=0}^{\lfloor\frac{n}{2}\rfloor-1}k(n-2-k)E_k(D)\,.
\end{equation}
Equivalent to the proof of Proposition~1 in~\cite{twopage} this equation can be rewritten to
\begin{equation}\label{eq:crkedge}
	\begin{array}{rcl}
	\crn(D)& = & 2\sum\limits_{k=0}^{\lfloor\frac{n}{2}\rfloor-2}\sum\limits_{i=0}^{k}\left(  k+1-i\right)  E_{i}(D)
				-\frac{1}{2}\binom{n}{2}\lfloor\frac{n-2}{2}\rfloor \\
			&&  -\frac{1}{2}(1+(-1)^{n})\sum\limits_{i=0}^{\lfloor \frac{n}{2}\rfloor-2}\left(  \lfloor \frac{n}{2}\rfloor-2+1-i\right)  E_{i}(D).
	\end{array}
\end{equation}
If we consider
\[
E_{{\leq}k}\left(  D\right)  :=\sum\limits_{j=0}^{k}E_{j}\left(
D\right)
\]
and
\[
E_{{\leq}{\leq}k}(D)  :=\sum\limits_{j=0}^{k}E_{\leq j}\left(
D\right)  =\sum\limits_{j=0}^{k}\sum\limits_{i=0}^{j}E_{i}\left(
D\right) =\sum\limits_{i=0}^{k}\left(  k+1-i\right)  E_{i}\left(
D\right)
\]
we can get the original formulation in \cite{twopage}:
\begin{equation}\label{eq:twopage_app}
\crn(D)=2\sum\limits_{k=0}^{\lfloor
n/2\rfloor-2}E_{{\leq}{\leq}k}(D)-\frac
{1}{2}\binom{n}{2}\left\lfloor \frac{n-2}{2}\right\rfloor -\frac{1}%
{2}(1+(-1)^{n})E_{\leq\leq\left\lfloor n/2\right\rfloor -2}(D).
\end{equation}

\end{document}